\newcommand{\CP}{\mathbb{CP}}
\newcommand{\kahler}{K\"ahler }
\newcommand{\szego}{Szeg\"o }
\newcommand{\wt}{\widetilde}
\renewcommand{\d}{\partial}
\newcommand{\ep}{\varepsilon}
\newcommand{\dd}{{\bf d}}
\newcommand{\dbar}{\bar\partial}
\newcommand{\R}{\mathbb{R}}
\newcommand{\E}{\mathbb{E}}
\newcommand{\la}{\lambda}
 \def   \half   {{\frac{1}{2}}}
\newcommand{\C}{\mathbb{C}}
\def \to {\rightarrow}
\newcommand{\ga}{\gamma}
\newcommand{\lcal}{\mathcal{L}}
\newcommand{\ccal}{\mathcal{C}}
\renewcommand{\H}{{\mathbf H}}
\newcommand{\hcal}{\mathcal{H}}
\newcommand{\mcal}{\mathcal{M}}
\newcommand{\ncal}{\mathcal{N}}
\newcommand{\ocal}{\mathcal{O}}
\newcommand{\scal}{\mathcal{S}}
\newtheorem{maintheo}{{\sc Theorem}}
\newtheorem{maincor}{{\sc Corollary}}
\newtheorem{theo}{{\sc Theorem}}[section]
\newtheorem{cor}[theo]{{\sc Corollary}}
\newtheorem{lem}[theo]{{\sc Lemma}}
\newenvironment{rem}{\medskip\noindent{\it Remark:\/} }{\medskip}
\newenvironment{defin-no-number}{\medskip\noindent{\it Definition:\/} }{\medskip}
\begin{document}\title[Supremum of $L^2$ normalized random holomorphic fields]{Median and mean of the Supremum of $L^2$ normalized random holmorphic fields}
\author[Renjie Feng]{Renjie Feng}
\author[Steve Zelditch]{Steve Zelditch}
\address{Department of Mathematics and Statistics, Mcgill University, Canada}
\email{renjie@math.mcgill.ca}
\address{Department of Mathematics, Northwestern University, USA}
\email{zelditch@math.northwestern.edu}

\thanks{Research partially supported by NSF grant  DMS-1206527.}

\date{\today}
\maketitle
\begin{abstract}  We  prove that the expected value and median of the supremum of $L^2$ normalized random holomorphic fields
of degree $n$ on $m$-dimensional K\"ahler manifolds are asymptotically of order $\sqrt{m\log n}$. There is an exponential concentration of measure
of the sup norm around this median value. Prior results  only gave the upper bound. The estimates are based
on the entropy methods of Dudley and Sudakov combined with a precise analysis of the relevant distance functions
and covering numbers using off-diagonal asymptotics of Bergman kernels. Recent work on the value distribution
are also used. 
\end{abstract}
\vspace{.5in}

The purpose of this note is to determine the asymptotic  mean and median  of the sup-norm   functionals
$$\lcal_{\infty}^n : H^0(M, L^n) \to \R_+, \;\;\;\; \lcal_{\infty}^n (s_n) = \sup_{z \in M} |s_n(z)|_{h^n} $$
on the subspace \begin{equation} \label{SH0} SH^0(M, L^n)  = \{s_n \in H^0(M, L^n): \|s_n\|_{h^n}^2: = \int_M |s_n(z)|^2_{h^n} d V = 1\}
\end{equation}  of $L^2$ normalized random holomorphic sections of the $n$th power $(L^n, h^n) \to (M, \omega)$
of a positive Hermitian holomorphic line bundle over a compact K\"ahler manifold. As discussed at length in \cite{SZ, SZ2} and \cite{FZ} (among many other articles), holomorphic  sections of positive line bundles are the analogues on compact complex manifold
of polynomials of degree $n$, and  in the special case of $M = \CP^m$ and $L= \ocal(1)$,  $H^0(M, L^n)$  is the space of homogeneous holomorphic polynomials of degree $n$ on $\C^{m+1}$ (see \S \ref{BACKGROUND} for background).
The inner product $||s_n||_{h^n}^2$ induces a unit mass  spherical Haar measure $\nu_n$ on  $SH^0(M, L^n)$ and we are interested
in the statistical properties of $\lcal_{\infty}^n$ in this  spherical ensemble. As discussed in \cite{FZ, SZ}, we regard the  spherical ensemble as primary since our goal is to measure  sup norms
of $L^2$ normalized sections. We denote the expectation of a random variable in any measure $\mu$ by $\E_{\mu}$,
and the  median by $\mcal_{\mu}$.

\begin{maintheo} \label{MAIN}  The median and mean value of $\lcal_{\infty}^n$ on $(SH^0(M, L^n), \nu_n)$ satisfy
$$ \mathcal M_{\nu_n}(\mathcal L^n_\infty) = \sqrt{m\log n}+o(\sqrt{\log n}), \;\; \mbox{resp.} \;\; \E_{\nu_n} \lcal_{\infty}^n
= \sqrt{m\log n}+o(\sqrt{\log n}) . $$

\end{maintheo}

The upper bound on the median with an unspecifed constant was proved  in \cite{SZ}, and the question of finding
its true order of magnitude was raised there.
To our knowledge, the lower bound is new, although there are many prior results  on suprema of
random processes  in other contexts, and the  proof   is based in part on   classical entropy methods of Dudley \cite{D,D2}  and Sudakov \cite{S}.  The main new ingredient is the analysis of the pseudo-metrics ${\bf d}_n$
induced by the holomorphic random fields, which makes use of the  off-diagonal asymptotics of the Szeg\"o kernel in
the complex geometric setting of \cite{SZ2}.  Use is also made of recent results on the value-distribution of
the fields \cite{FZ} in \S \ref{sharp} in getting precise bounds. The same methods apply in the real domain to random spherical harmonics
and their generalization to random Riemannian waves, except that the pseudo-metric in that setting is determined by
the spectral projections kernels for the Laplacian.

 The precise value of the median is needed to obtain a  concrete Levy concentration of measure result
for the sup norm. Levy concentration  (see \eqref{LEVY}) states that a Lipschitz functional is exponentially concentrated around its median value \cite{Le}. As noted in \cite{SZ}, the  functional $\lcal^n_{\infty}$ is Lipschitz with the estimate \eqref{levyformula} of Levy concentration of measure theorem,  Theorem \ref{MAIN} thus  has the following

\begin{maincor}  \label{LEVY2} There exists   constants $C, c>$ independent of $n$  so that
$$
 \nu_n \{ s_n \in SH^0(M, L^n) :  \left| \lcal^n_{\infty}(s_n) -  \sqrt{m\log n} \right|  \geq \epsilon \sqrt{\log n}  \} \leq
C n^{- c \epsilon^2}.  $$
for any $\epsilon>0$.
\end{maincor}

Throughout, we use $c$ and $C$ to denote positive constants which may differ in each instance.  The symbol $A\sim B$ means $A$ and $B$ are bounded from above an below by positive constants independent of $n$.

\subsection{\label{SKETCH} Sketch of the proof}

The  spherical measures $\nu_n$  are  asymptotically equivalent to   certain
 Gaussian measure which we call {\it normalized Gaussian measure} on $H^0(M, L^n)$ (see  \eqref{HGMg}-\eqref{law}).  We  first study the expectation and median of $\lcal^n_{\infty}$ of  normalized Gaussian random sections in Theorem \ref{normalized}; we then derive the result for $\nu_n$.

Let us recall the entropy estimates on suprema of Gaussian (or sub-Gaussian) random processes.
Let $(M, d)$ be a compact metric space.
 Given
a centered  random process $\{Y_x : x\in M\}$  (i.e. $\E Y_x = 0 $ for all $x \in M$), a pseudometric on $M$ may be
defined by
\begin{equation} \label{d}
{\bf d} (x,y) = \sqrt{\E |Y_x - Y_y|^2}. \end{equation}

The process $\{Y_x : x\in M\}$ is called \emph{sub-Gaussian} if
\begin{equation}\label{E:process}
\forall x,y\in M,\ \forall t>0, \quad
\mathbb P\big[|Y_x - Y_y| \ge t \big] \le 2 \exp\left[ -
  \frac{b\ t^2}{\dd^2(x,y)} \right]
\end{equation}
for some constant $b>0$. A Gaussian processes is sub-Gaussian.

Entropy estimates for suprema of (sub-) Gaussian processes involve the $\epsilon$-covering
number $N(M,\dd,\epsilon)$  of $(M,\dd)$, i.e.
 the minimal cardinality of an $\epsilon$-dense subset of $M$, i.e.
$$N(M,\dd,\epsilon) := \inf\{ \# \ncal: \ncal \subset M:
\forall x\in M \ \exists y\in \mathcal{N} : \ \dd(x,y)\le \epsilon\}. $$

Dudley's entropy upper  bound state:

\begin{maintheo}\label{T:Dudley} \cite{D,D2,K,MP,Li}
Let $\{Y_x : x\in M\}$ be a centered (sub-)Gaussian random process. Then
\begin{equation}
\E \sup_{x\in M} |Y_x| \le C \int_0^\infty \sqrt{\log N(M,\dd,\varepsilon)}
  \ d \epsilon,
\end{equation}
where $C>0$ depend only on the constant $b$ in the (sub-)Gaussian estimate
 for the process.
\end{maintheo}

If $Y_x$ is a centered Gaussian process, Sudakov's minoration gives the lower bound,
\begin{maintheo} \cite{S,Li}

\begin{equation}
\E \sup_{x\in M} |Y_x|  \geq  c \epsilon \sqrt{\log N(M, \dd, \epsilon)}
\end{equation}
\end{maintheo}


In Lemmas \ref{METRICSZ} and \ref{dis}, we  relate the distance $\dd_n$ for the $n$th normalized Gaussian process to
the  distance between points of the  Kodaira (or coherent states) embeddings
\begin{equation} \Phi_n: M \to  SH^0(M, L^n). \end{equation}
As proved in \cite{Ti,Ze}, this embedding is an asymptotically isometric embedding when properly normalized. Hence on very small length
scales, the intrinsic  K\"ahler distance and the extrinsic  $L^2$  distance  of $H^0(M, L^n)$  when restricted to $\Phi_n(M)$ are very similar.  The precise comparison is in Lemma \ref{INFDIS}.

We first prove the upper and lower bounds on expected sup norms for the {\it normalized  Gaussian ensemble} in Section \ref{proofmain} with unspecified constants; the case of the spherical ensemble  follows Lemma \ref{SPH}. Bounds of the median then follow  by the Levy concentration theorem (\S \ref{con}).  In the last section \S \ref{sharp}, we use the exact
asymptotic formula in \cite{FZ} for the value distribution to  estimate the constants in upper and lower bounds and show that the mean and the median are asymptotic to the sharp bound $\sqrt{m\log n}$.

\subsection{Prior results}

The study of sup-norms of Gaussian random fields has a long history, and we only indicate a few of the classical results.
In \cite{SaZy}, Salem-Zygmund studied sup norms
$$M_n(t): = \max_x |P_n(x, t)|$$ of random
 trigonometrical polynomials of the form,
$$P_n (x, t) = \sum_{m = 0}^n r_m \phi_m(t)  \cos(m x),$$
where $\{\phi_m\}$ is the orthonormal basis of Rademacher functions and
$R_n = \sum_{m = 1}^n r_m^2$. We recall that the Rademacher system is
the orthogonal system $\{\phi_m(t): = \mbox{sgn} (\sin 2^{m } \pi t) \}_{m=1}^{\infty}$ for $t \in [0, 1]$.
Let $M_n(t) = \max_x |P_n(x, t)|$.  In Theorems 4.3.1 resp.  4.5.1 of \cite{SaZy}, it is
proved that
$$
 c(\gamma)  \leq \liminf_{n\to\infty} \frac{M_n(t)}{(R_n \log n)^{\half}}  \leq  \limsup_{n\to\infty} \frac{M_n(t)}{(R_n \log n)^{\half}}
\leq A\;\;\; \mbox{almost surely}. $$
They also proved,
$$\mathbb P\{M_n(t)<C(R_n\log n)^{\half}\}\to 1$$
as $C$ or $n$ large enough.

Kahane \cite{K} gives an upper bound for sup-norms of  Gaussian random functions of the form
$$P(t_1, \dots, t_m) = \sum  a_j f_j(t_1, \dots, t_m)$$
where $\{f_j\}$ are complex trigonometric polynomials in $m$
variables of degrees less than or equal to $n$,  $a_j$ are normal random
variables and $\sum$ is a finite sum. In \cite{K},
(Chapter 6, Theorem 3),  it is proved that
$$\mathbb P\left\{\|P\|_{\infty} \geq\textstyle C (m\sum\|f_j\|_{\infty}^2
\log n \right)^{\half}\} \leq n^{-2} e^{-m}. $$


 In   \cite{SZ}  upper bounds   on the expected value of $\lcal^n_{\infty}$ in the
spherical ensembles of this article are proved:
$$  \nu_n\left\{s_n\in SH^0(M,L^n): \sup_M|s_n|_{h^n}>c\sqrt{\log
n}\right\} < O\left(\frac{1}{n^2}\right)\,, $$ for some constant
$c<+\infty$. (In fact, for any $k>0$, the probabilities are of order  $O(n^{-k})$ if one chooses $c$ to be sufficiently
large.)  It is also proved that  sequences of sections  $s_n \in SH^0(M, L^n)$ satisfy:
$$\|s_n\|_{\infty}= O(\sqrt{\log
n})\;\;\; \mbox{ almost surely}.$$
The proof is based on  the same
ingredients as the Dudley entropy bound of this note.

\section{\label{BACKGROUND} Background}

In this section, we go over the geometric background to our setting and the facts  about Szeg\"o kernels
that we need in the proofs of the main results.

\subsection{\kahler geometry}

The setting consists of a compact \kahler manifold  $(M,\omega)$  of complex dimension $m$
and a  positive Hermitian holomorphic line bundle  $(L,h)\to M$. We fix   a local non-vanishing holomorphic section $e$ of $L$ over an
open set $U \subset M$ such that locally $L|_U\cong U\times\C$. We define the K\"ahler potential
of $\omega$ by   $|e|_{h} = h(e, e)^{
1/2} = e^{- \phi}$.
  The curvature of the Hermitian metric $h$,
\begin{equation}\Theta_h=-\d\dbar \log |e|_h^2\end{equation}
is a positive $(1,1)$ form and $\omega = \frac{i}{2} \Theta_h$ \cite{GH}.

The Hermitian
metric $h$ induces a Hermitian metric $h
^n$ on the $n$th tensor power $L^n=L \otimes \cdots \otimes L$ of $L$,  given by $|e^{ \otimes n}|_{h^n}=|e|_h^n$.
In  local coordinate, we can write a global holomorphic section as $s_n=f_ne^{\otimes n}$ where $f_n$ is a holomorphic function on $U$, and  $|s_n|_{h^n}=|f_n|e^{-\frac {n\phi} 2}$.

 The spaces  $H^0(M,L^n)$  of global holomorphic sections of $L^n$ provide generalizations of polynomials of degree $n$
to $M$.
By the Riemann-Roch formula, the  dimension  $d_n = \dim H^0(M,L^n)$ grows at the rate
\begin{equation}\label{RR}
d_n=\frac{c_1(L)^m}{m!}n^m+O(n^{m-1})\,.\end{equation}


We  define an inner product on $H^0(M,L^n)$ by
\begin{equation}\label{innera}\langle s^n_1, s^n_2\rangle_{h^n}=\int_M h^n(s^n_1, s^n_2)dV,\,\,\,\, s^n_1, s^n_2\in H^0(M, L
^n)\end{equation}
 where $dV=\frac{\omega^m}{m!}$ is the volume form. We  assume the volume is normalized as $\int_M dV=1$.
In local coordinates,
\begin{equation}\label{innera2}\langle s^n_1, s^n_2\rangle_{h^n}=\int_Mf_1\bar f_2 e^{-n\phi}dV\end{equation}
where we write
$s_1^n=f_1^ne^{\otimes n}$ and $s_2^n=f_2^ne^{\otimes n}$ in the local coordinate.

We choose an orthonormal basis $\{s_j^n\}$ under this inner product and we can write every element in $H^0(M,L^n)$ as the orthogonal series
\begin{equation}\label{sections}s_n = \sum_{j = 1}^{d_n} a_j s^n_j,\end{equation}
where $s_j^n=f^n_je ^{\otimes n}$ in the local coordinate.
\subsection{\label{nun}Spherical ensemble}

We define the spherical probability measure   $d\nu_n $ to be  normalized  Haar measure on \begin{equation} SH^0(M, L^n) =\{s_n \in H^0(M, L^n) : ||s_n||_{L^2} = 1\}.
\end{equation}  We refer to the corresponding
probability space as the  spherical ensemble. Using the orhonormal basis $\{s_j^n\}$ we may identify
$SH^0(M, L^n)$ with the unit sphere $S^{2 d_n -1} \subset \C^{d_n}$.

\subsection{\label{GAUSS}  Gaussian measure}

We also endow $H^0(M, L^n)$ with   {\it normalized  Gaussian measures} adapted to the Hermitian metric and the associated inner product
\eqref{innera} on sections as follows: We put
\begin{equation} \label{HGMg} d\gamma_n(s_n)=(\frac{d_n}{\pi})^{d_n}e^
{-d_n|a|^2}da\,,\qquad s_n=\sum_{j=1}^{d_n}a^n_js^n_j\,,\end{equation}
where $\{s^n_1, \cdots, s^n_{d_n}\}$ is the orthonormal basis of  $H^0(M,L^n)$ with respect to the inner product \eqref{innera}.
 Equivalently, the
coefficients $a_j^n$ are complex Gaussian random variables which satisfy the following normalization conditions,
 \begin{equation}\label{law}\E a^n_k=0,\,\,\,\E a^n_k\bar a^n_j=\frac 1{d_n}\delta_{kj},\,\,\, \E a^n_ka^n_j=0\end{equation}
  Here, we denote  the expectation with respect to $\gamma_n$ by  $\E $.
 Under this normalization, we have the expected $L^2$ norm of $s_n$,  \begin{equation}\label{norm1}\E \|s_n\|^2_{h^n}= 1.\end{equation}

As proved in \cite{FZ}, this normalized Gaussian measure is asymptotically equivalent to the spherical measure $\nu_n$ \S \ref{nun} of principal concern
in this article.

\subsection{Lift to circle bundle $X_h$}  We identify the
sections \eqref{sections} with  the locally defined  functions  \begin{equation} \label{NEWSN} s_n=(\sum_{j=1}^{d_n}a^n_j f_j^n)e^{-\frac {n\phi}{2}} .\end{equation}
 This identification can be made global by  lifting holomorphic
sections   $s_n$ of $L^n$ to equivariant scalar  functions $\hat{s}_n: X_h \to \C$  on the unit circle
bundle $X_h \to M$ defined by the metric $h$ (see \cite{SZ2} for background). That is,  \begin{equation} \label{Xh} X_h =
\{v \in L^*: \|v\|_{h^*} =1\} \to M \end{equation} where  $\pi: L^* \to M$
denotes the dual line bundle to $L$ with dual metric $h^*$. We let
$A$ be the connection 1-form on $X$ given by the Chern $\nabla$; we
then have $dA =\pi^* \omega$, and thus $A$ is a contact
form on $X_h$, i.e., $A\wedge (dA)^m$ is a volume form on $X_h$.

We let $r_{\theta}x =e^{i\theta} x$ ($x\in X_h$) denote the $S^1$
action on $X_h$ and denote its infinitesimal generator by
$\frac{\partial}{\partial \theta}$. A section $s$ of $L$
determines an equivariant function $\hat{s}$ on $L^*$ by the rule
$\hat{s}(\lambda) = \left(\lambda, s(z) \right)$ ($\lambda \in
L^*_z, z \in M$).  We  restrict $\hat{s}$ to $X_h$ to obtain an equivariant
function transforming by  $\hat{s}(r_{\theta} x) = e^{i
\theta}\hat{s}(x)$. Similarly, a section $s_n$ of $L^{n}$
determines an equivariant function $\hat{s}_n$ on $X_h$: put
\begin{equation} \label{sNhat}\hat{s}_n(\lambda) = \left( \lambda^{\otimes
n}, s_n(z) \right)\,,\quad \la\in X_{h,z}\,,\end{equation} where
$\lambda^{\otimes n} = \lambda \otimes \cdots\otimes \lambda$;
then $\hat s_n(r_\theta x) = e^{in\theta} \hat s_n(x)$. We denote
by $\lcal^2_n(X_h)$ the space of such equivariant functions
transforming by the $n$-th character, and by ${\mathcal H}_n$ the
subspace of CR functions annihilated by the tangential
Cauchy-Riemann operator $\bar{\partial}_b.$ We refer to \cite{SZ2} for further
details and references.

The space ${\mathcal H}_n$ carries the natural inner product
$$\left\langle \hat{s}, \overline{\hat t} \right\rangle = \int_{X_h} \hat s\,
\overline{\hat t} \, dV_{X_h}, \;\;\;\; dV_{X_h} = A \wedge (d
A)^{m-1}. $$ Lifting the orthonormal basis to $\{\hat s_j^n\}$, we modify \eqref{sections}
to
write every element as
$$\hat{s}_n = \sum_{j = 1}^{d_n} a^n_j \hat s^n_j. $$

\medskip

We  trivialize the bundle $X_h \to M$ \eqref{Xh} using
a  {\it Heisenberg coordinate chart\/} at  $x_0 \in X_h$ , i.e. a coordinate chart
  of the
form
\begin{equation}\rho(z_1,\dots,z_m,\theta)= e^{i\theta} e^{- \phi}
e^*(z)\,,\label{coordinates}\end{equation} around $x_0$
 where
$(z_1,\dots,z_m)$ are preferred
coordinates centered at  $P_0=\pi(x_0)$ in the sense of \cite{SZ2}.
If  $s_n=fe^{\otimes
n}$ is a local section of $L^n$, then  by (\ref{sNhat}) and
(\ref{coordinates}),
\begin{equation}\label{sNhat*}\hat s_n(z,\theta) =
f(z) e^{-n \phi }e^{in\theta}\,.\end{equation}
Thus, \eqref{NEWSN} is  the lift $\hat{s}_n$ (with the common  factor of $e^{in\theta}$ supressed).

\subsection{\label{COMP} Comparison of $\gamma_n$ and $\nu_n$}

The  Gaussian measure $\gamma_n$  is asymptotically  concentrated near the unit sphere as $d_n \to \infty$,
so it may be expected that the median and mean of $\lcal^n_{\infty}$ should be asymptotically the
same in both ensembles.

The  spherical probability measure $\nu_d$ on the sphere $S^d(\sqrt{d})$
tends to the Gaussian measure  as $d \to \infty$ in the following sense: if $P_d:\R^d\to \R^k$ is the map,
$P_d(x)=\sqrt{d}(x_1,\dots,x_k)$, then for all $k$,  $P_{d*}\nu_d \to \ga_k = (2 \pi)^{-d/2} e^{-|x|^2/2} dx\,.$
Moreover,
\begin{equation} \label{COMPARE} \left\{ \begin{array}{l} \gamma_d\{x \in \R^d: ||x||^2 \geq \frac{d}{1 - \epsilon}  \} \leq e^{- \epsilon^2 d/4},\\  \gamma_d\{x \in \R^d: ||x||^2 \leq (1 - \epsilon) d \} \leq e^{- \epsilon^2 d/4}.
\end{array} \right. \end{equation}

To compare expectations of sup norms, we use the obvious

\begin{lem} \label{SPH}  The expected values of $\lcal_{\infty}^n$ with respect to  the spherical, resp. normalized
Gaussian measure, are related by
$$\begin{array}{lll} \E_{\gamma_n} \lcal_{\infty}^n &= &  C_n \E_{\nu_n}\lcal_{\infty}^n,  \end{array}$$
where
$$C_n = 1 + o(1), \;\; n \to \infty  . $$

\end{lem}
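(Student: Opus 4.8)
The plan is to realize both ensembles concretely on $\C^{d_n} \cong \R^{2d_n}$ via the orthonormal basis $\{s_j^n\}$, and to exploit the fact that $\lcal_\infty^n$ is a positively $1$-homogeneous function of the coefficient vector $a = (a_1^n,\dots,a_{d_n}^n)$. Indeed, writing a section as $s_n = \sum_j a_j^n s_j^n$, we have $\lcal_\infty^n(s_n) = \|a\| \cdot \lcal_\infty^n(s_n/\|a\|)$, where $s_n/\|a\|$ lies on the unit sphere and its coefficient vector is distributed according to $\nu_n$ (by rotational invariance of the Gaussian). Moreover, under $\gamma_n$ the normalization \eqref{HGMg} is such that $d_n\|a\|^2$ is a $\chi^2$ random variable with $2d_n$ degrees of freedom, and — crucially — $\|a\|$ and the direction $a/\|a\|$ are \emph{independent}. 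Therefore
\begin{equation}\label{factorize}
\E_{\gamma_n}\lcal_\infty^n = \E_{\gamma_n}\big[\|a\|\big]\cdot \E_{\nu_n}\lcal_\infty^n,
\end{equation}
so the lemma holds with $C_n = \E_{\gamma_n}\|a\|$.

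The remaining step is to show $C_n = \E_{\gamma_n}\|a\| = 1 + o(1)$. Since $\|a\|^2 = R/d_n$ with $R \sim \chi^2_{2d_n}$, we have $\E R = 2d_n$ and $\mathrm{Var}(R) = 4d_n$, hence $\E\|a\|^2 = 1$ (consistent with \eqref{norm1}) and $\mathrm{Var}(\|a\|^2) = 1/d_n \to 0$. By Jensen's inequality $C_n = \E\|a\| \le \sqrt{\E\|a\|^2} = 1$, while the concentration estimate \eqref{COMPARE} (applied with $d = 2d_n$, after absorbing the $d_n$ rescaling) gives $\gamma_n\{\|a\|^2 \le 1-\epsilon\} \le e^{-\epsilon^2 d_n/2}$, which forces $\E\|a\| \ge (1-\epsilon)^{1/2}(1 - e^{-\epsilon^2 d_n/2})$ for every fixed $\epsilon > 0$. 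Letting $n \to \infty$ and then $\epsilon \to 0$ yields $C_n \to 1$. (Alternatively one computes $\E\|a\|$ exactly as a ratio of Gamma functions, $\E\|a\| = d_n^{-1/2}\,\Gamma(d_n + \tfrac12)/\Gamma(d_n)$, and applies Stirling to get $C_n = 1 - \tfrac{1}{8d_n} + O(d_n^{-2})$; since $d_n \to \infty$ by \eqref{RR}, this is $1 + o(1)$.)

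I do not expect any real obstacle here: the proof is essentially the observation that Gaussian measure is a radial average of spherical measures of varying radii, combined with $L^2$-concentration of the radius. The one point that deserves care is verifying the independence of $\|a\|$ and $a/\|a\|$ under $\gamma_n$ — this is the standard fact that an isotropic Gaussian vector has radial part independent of its angular part — and checking that the specific normalization constant $(d_n/\pi)^{d_n}e^{-d_n|a|^2}$ in \eqref{HGMg} indeed produces $\E\|a\|^2 = 1$ rather than some other constant, which is already recorded in \eqref{norm1}. Everything else is a one-line concentration argument or a Stirling estimate.
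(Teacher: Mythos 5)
Your proposal is correct and is essentially the paper's own argument: the paper performs the same polar-coordinate factorization (using the homogeneity $\lcal_\infty^n(rs)=r\lcal_\infty^n(s)$) and arrives at exactly your constant $C_n=\E_{\gamma_n}\|a\|=d_n^{-1/2}\,\Gamma(d_n+\tfrac12)/\Gamma(d_n)\to 1$, which it evaluates via the Gamma-function ratio just as in your parenthetical remark. The concentration-of-measure variant you sketch is a harmless alternative ending, but the decomposition and the key computation coincide with the paper's.
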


\begin{proof}

The one-parameter family of complex Gaussian measures on $H^0(M, L^n)$ may be written formally as
$$d\gamma_n^{\alpha} =(\frac{ \alpha}{\pi}) ^{d_n} e^{- \alpha ||s||^2} D s $$ where $Ds$ is Lebesgue measure. If we set $ \alpha=d_n$, then $d\gamma_n^\alpha$ becomes the normalized Gaussian ensemble $d\gamma_n$.

 For any $s \in H^0(M, L^n)$ and for any $r > 0$,
$\lcal_{\infty}^n ( r s) = \sup_M |r s(z)|_h = r \lcal^n_{\infty}(s)$. Hence,
$$\begin{array}{lll} \E_{\gamma^\alpha_n}  \lcal_{\infty}^n  &= &   \frac{\alpha^{d_n}}{\pi^{d_n}}\int_{H^0(M, L^n)}  \lcal_{\infty}^n(s)  e^{- \alpha  ||s||^2} D s
 \\ &&\\
&=&\frac{\alpha^{d_n}}{ \pi^{d_n}} \omega_{2 d_n} \int_0^{\infty} \int_{S H^0}     \lcal_{\infty}^n (\tilde s) r e^{- \alpha  r^2} r^{2d_n-1} dr d \nu_n\\ &&\\
&=& C_n \E_{\nu_n} \lcal^n_{\infty},
\end{array}$$
where we write $s=r\tilde s$ with $\tilde s\in SH^0(M,L^n)$, and
$$C_n = \frac{\alpha^{d_n}}{ \pi^{d_n}} \omega_{2 d_n} \int_0^{\infty}  r e^{- \alpha  r^2} r^{2d_n-1} dr
=  \frac{\alpha^{-\half}}{ 2\pi^{d_n}} \omega_{2 d_n}  \Gamma(d_n + \half). $$

We then put $\alpha = d_n$ to obtain
$$C_n = \frac{1}{2 d_n^{\half} \pi^{d_n}} \omega_{2 d_n}  \Gamma(d_n + \half).$$
Here, $\omega_k= \frac{2\pi^{k/2}}{\Gamma(k/2)}$ is the surface measure of the unit sphere $S^{k-1} \subset \R^{k}$.
Since $\frac{\Gamma(d_n + \half)}{\Gamma(d_n)} \sim d_n^{\half}$ we obtain that $C_n \simeq 1. $

\end{proof}

\subsection{Covariance kernel of $\gamma_n$ and Bergman-\szego kernels}\label{s-kernels }

The Bergman-Szeg\"o kernel $\Pi_n(x,y)$ is the Schwartz kernel of the  orthogonal projection
 \begin{equation}\Pi_n : \lcal^2(X_h) \rightarrow \hcal_n(X_h),
\end{equation}
   i.e.
\begin{equation} \Pi_n F(x) = \int_{X_h} \Pi_n(x,y) F(y) dV_{X_h} (y)\,,
\quad F\in\lcal^2(X_h)\,.
\end{equation} It is given in terms of the orthonormal basis by
\begin{equation}\label{szego}\Pi_n(x,y)=\sum_{j=1}^{d_n}
\hat s_j^n(x)\overline{ \hat s_j^n(y)}\,.\end{equation}
In local coordinates it has the form,
 \begin{equation}\label{szegokernel}
 \Pi_n(z,w)=\sum_{j=1}^{d_n} f^n_j(z)\overline{ f^n_j(w)}e^{-\frac{n(\phi(z)+\phi(w))}{2}} \end{equation}
It arises in probability as the  covariance kernel of $\gamma_n$, i.e.
\begin{equation} \label{SZ} \E( s_n(z) \overline{s_n(w)} ) = \frac{1}{d_n} \Pi_n(z,w).  \end{equation}

The Bergman-\szego kernels determine Kodaira maps
 $\Phi_n : M \to PH^0(M,L^n)'$  to projective
space, defined by \cite{GH}
\begin{equation}\label{Kmap} \Phi_n : M \to\CP^{d_n-1}\,,\qquad
\Phi_n(z)=\big[s^n_1(z):\dots:s^n_{d_n}(z)\big]\,.\end{equation}We lift the maps to $X_h$ by
\begin{equation}\label{lift}\wt{\Phi}_n : X_h \to
\C^{d_n}\,,\qquad
\wt\Phi_n(x)=(\hat s^n_1(x),\dots,\hat s^n_{d_n}(x))\,.\end{equation}
We observe that
\begin{equation}\label{PIPHI} \Pi_n(x,y)=\wt\Phi_n(x) \cdot
\overline{\wt\Phi_n(y)},\,\;\;\;
\Pi_n(x,x)=\|\wt\Phi_n(x)\|^2\,.\end{equation}

On the diagonal, the \szego kernel admits a complete asymptotic expansion \cite{Ze},
 \begin{equation} \label{CXDIAG}  \Pi_n(z,z)  =  a_0 n^m +
a_1(z) n^{m-1} + a_2(z) n^{m-2} + \dots \end{equation} for certain smooth
coefficients $a_j(z)$ with $a_0 = \pi^{-m}$.
This implies Tian's almost isometry theorem:
 Let $\omega_{FS}$ denote the Fubini-Study form on $\CP^{d_n-1}$.
Then \begin{equation} \label{TIAN} \|\frac{1}{n}  \Phi_n^*(\omega_{FS}) - \omega\|_{\ccal^k} =
O(\frac{1}{n}) \end{equation} for any $k$. We refer to \cite{SZ2} for notation and background.
We also need offf-diagonal asymptotics of the Bergman kernel  \cite{SZ2}.  We  denote by $r(z,w)$ the geodesic distance between $z,w$ with respect to the K\"ahler
metric $\omega$ on $M$.

\begin{theo}\label{SZFACTS}\begin{enumerate}
\item [a)] Within a  $\frac{C}{\sqrt{n}}$ neighborhood of the diagonal,
the Bergman-\szego kernel is given by
the scaling asymptotics:
\begin{equation}\begin{array}{l} n^{-m}  \Pi_N(z_0 + u/\sqrt{n}, \theta/n; z_0 + v/\sqrt{n}, 0)
\sim \Pi_1^{\H}
(u,\theta;  v, 0)\left[1 + O(1/\sqrt{n})\right].
\end{array}\end{equation}
Here
$$\Pi^\H_1(u,\theta;v,\psi)= \frac{1}{\pi^m} e^{i(\theta-\psi)+i\Im
(u\cdot \bar v)-\half |u-v|^2}\,$$ is the \szego kernel of the
reduced Heisenberg group.

\medskip
\item [b)] If  $r(z, w) \leq C/n^{1/3}$, we have:
\begin{equation}\label{neardiag2} |\Pi_n(z,w)| \le \left(\frac 1{\pi^m}
+o(1)\right) {n^m}\exp\left(-\frac {1-\ep} 2 n
d(z,w)^2\right)+O(n^{-\infty})\;. \end{equation}

\item [c)]  On all of $M$, we have:
\begin{equation}\label{offdiag} |\Pi_n(z, w)| \le Cn^m \exp\left(-\la \sqrt n\,
d(z,w) \right)\;.\end{equation}
for some positive $\lambda>0$.
\end{enumerate}

\end{theo}


The estimate (b) on the larger $n^{-1/3}$ balls is  from
\cite[Lemma~5.2(ii)]{SZ2}. The off-diagonal estimate (c) follows by an Agmon distance
argument, as noted by M. Christ
\cite{Ch}.

\section{The metrics $\dd_n$}
The proof of our main result is based on the Dudley's metric entropy method which relates the median of the suprema of a process by
its `pseudometric' . In this section, we will  compute the pseudometric (which turns out to be a metric) for our normalized Gaussian ensemble and the spherical ensemble. There is a clash of notation between the dimension
$d_n$ of $H^0(M, L^n)$ and the metric $\dd_n$, but both are standard and we distinguish them by putting the
metric in boldface. Recall that  $r(z,w)$ denotes the geodesic distance between $z,w$ with respect to the K\"ahler
metric $\omega$ on $M$.




\begin{lem}  \label{METRICSZ} In the the spherical $\nu_n$ and normalized Gaussian ensemble $\gamma_n$, we have

\begin{equation}\label{distance} \dd_n(z,w)=\frac1{\sqrt{d_n}}\sqrt {\Pi_n(z,z)+\Pi_n(w,w)-2\Re \Pi_n(z,w)} \end{equation}
where $\Pi_n(z,w)$ is the Szeg\"o kernel in \eqref{szegokernel}.

\end{lem}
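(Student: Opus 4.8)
The plan is to compute the pseudometric $\dd_n$ of definition \eqref{d} directly from the Gaussian law \eqref{law} of the coefficients, using the lifted description of the random section. Write the lifted random section as $\hat s_n = \sum_{j=1}^{d_n} a_j^n \hat s_j^n$ with the $a_j^n$ complex Gaussian satisfying $\E a_k^n \overline{a_j^n} = \frac1{d_n}\delta_{kj}$ and $\E a_k^n a_j^n = 0$. The associated centered process on $M$ is $Y_z = \hat s_n(x)$ for $x$ a point of $X_h$ over $z$ (the factor $e^{in\theta}$ cancels in all moduli and in $|Y_z - Y_w|$ once we fix a Heisenberg frame), so that $\dd_n(z,w)^2 = \E|Y_z - Y_w|^2$.

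First I would expand $\E|Y_z - Y_w|^2 = \E|Y_z|^2 + \E|Y_w|^2 - 2\Re \E(Y_z \overline{Y_w})$ and evaluate each term using \eqref{law}. Since the $a_j^n$ are independent with variance $1/d_n$ and $\E a_k^n a_j^n = 0$, we get $\E(Y_z \overline{Y_w}) = \frac1{d_n}\sum_{j=1}^{d_n} \hat s_j^n(x)\overline{\hat s_j^n(y)} = \frac1{d_n}\Pi_n(x,y)$ by the reproducing-kernel formula \eqref{szego} (equivalently \eqref{SZ}), and in local coordinates this is $\frac1{d_n}\Pi_n(z,w)$ as in \eqref{szegokernel}. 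Likewise $\E|Y_z|^2 = \frac1{d_n}\Pi_n(z,z)$ and $\E|Y_w|^2 = \frac1{d_n}\Pi_n(w,w)$. Substituting gives
$$ \dd_n(z,w)^2 = \frac1{d_n}\big(\Pi_n(z,z) + \Pi_n(w,w) - 2\Re\Pi_n(z,w)\big), $$
which is \eqref{distance} after taking the square root. Note the quantity under the root is $\frac1{d_n}\|\wt\Phi_n(x) - \wt\Phi_n(y)\|^2 \ge 0$ by \eqref{PIPHI}, so the square root makes sense and $\dd_n$ is genuinely the restriction to $\Phi_n(M)$ of the ambient $L^2$ distance, rescaled by $1/\sqrt{d_n}$.

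For the spherical ensemble one cannot argue via independence of coordinates, so the small remaining point is to check that $\nu_n$ gives the same pseudometric. Here I would either invoke the asymptotic equivalence of $\nu_n$ and $\gamma_n$ from \cite{FZ} together with the scaling computation already used in Lemma \ref{SPH}, or, more cleanly, use that for the uniform measure on $S^{2d_n-1}\subset\C^{d_n}$ one has $\E_{\nu_n} a_k^n \overline{a_j^n} = \frac1{d_n}\delta_{kj}$ and $\E_{\nu_n} a_k^n a_j^n = 0$ exactly (by the rotational symmetry of the sphere), which are precisely the second-moment identities \eqref{law} used above; since $\dd_n$ depends only on these second moments, the formula is identical. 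The main (and only real) obstacle is bookkeeping: being careful that the $e^{in\theta}$ equivariance factors and the $e^{-n\phi/2}$ local trivialization factors are handled consistently so that $|Y_z-Y_w|^2$ is exactly $|s_n(z)|^2_{h^n}$-type data and matches \eqref{szegokernel}; there is no analytic difficulty.
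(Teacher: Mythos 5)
Your proof is correct and takes essentially the same route as the paper: a direct second-moment computation for $\gamma_n$ using \eqref{law}, followed by the observation that $\nu_n$ has identical second moments $\E_{\nu_n} a_j^n\overline{a_k^n}=\frac{1}{d_n}\delta_{jk}$ (the paper justifies this by noting that $a_j\bar a_k$ restricts to a degree-$2$ spherical harmonic orthogonal to constants and that $\E|a_j|^2$ equals its average over $j$, which is just your ``rotational symmetry'' made explicit). One small caution: your first suggested fallback for the spherical case, invoking the asymptotic equivalence of $\nu_n$ and $\gamma_n$, would only yield the formula up to $o(1)$ errors rather than the exact identity, so the exact second-moment argument you rightly prefer is the one to keep.
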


\begin{proof}  We first consider $\gamma_n$,   the normalized Gaussian random sections \eqref{HGMg} (or the equivalent expression \eqref{NEWSN}).  By
 definition and by \eqref{law},
$$\begin{array}{lll} \dd^2_n(z,w)  & = & \E \left|(\sum_{j=1}^{d_n}a^n_j f_j^n(z))e^{-\frac {n\phi(z)}{2}}   - (\sum_{j=1}^{d_n}a^n_j f_j^n(w))e^{-\frac {n\phi(w)}{2}} \right|^2 \\&&\\
& = & \E (\sum_{j, k=1}^{d_n} a^n_j \bar{a}_k^n f_j^n(z)) \overline{f_j^n(z))} e^{-n\phi(z)}
+ \sum_{j, k=1}^{d_n} a^n_j \bar{a}_k^n f_j^n(w)) \overline{f_j^n(w))} e^{-n\phi(w)} ) \\&&\\&& - 2 \Re  \E   (\sum_{j, k=1}^{d_n}a^n_j
\bar{a}^n_k f_j^n(z) \overline{  f^n_j(w)} )e^{- \frac{n \phi(z)}{2} -\frac {n\phi(w)}{2}}  \\ &&\\
& = & \frac 1{d_n}(\Pi_n(z,z) + \Pi_n(w,w) - 2 \Re \Pi_n(z,w)). \end{array}$$

  We then observe that the expectations for $\nu_n$ are the same as in  \eqref{law}:
$$\nu_n(a_j \bar{a_k}) = \left\{ \begin{array}{ll} 0,  & k \not= j \\ \\
\frac{1}{d_n}, & k = j. \end{array} \right. $$
Indeed, for $j \not= k$, $a_j \bar{a_k}$  is a homogeneous harmonic polynomial of degree $2$ on $\C^{d_n} \simeq \R^{2d_n}$.
Indeed, if we write  $a_j = u_j + i v_j $ (and similarly for $a_k$ then
$a_j \bar{a_k} = (u_j u_k + v_k v_j) + i (u_j v_k - u_k v_j)$ and  $\Delta_{\R^{2n}} = \Delta_{u} + \Delta_v$ will
annihilate it.
Hence  its restriction to $S^{2d_n -1}$ is a spherical harmonic of degree $2$ and it is orthogonal to the constant function,
proving the first statement. For the second we use that $\E |a_j|^2$ is independent of $j$ and therefore equals its average.
It follows that $\dd_{\nu_n} = \dd_n$.

\end{proof}

We may interpret the distance in terms of the  lifted  Kodaira embeddings \eqref{lift}.

\begin{lem}\label{dis} $\dd_n(z, w) = \frac1{\sqrt{d_n}}||\tilde\Phi_n^z - \tilde\Phi_n^w||_{L^2}. $ Thus $\dd_n(z,w)$ is a metric on the \kahler manifolds. \end{lem}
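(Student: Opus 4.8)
The plan is to identify the right-hand side of the claimed formula with the expression for $\dd_n$ already established in Lemma \ref{METRICSZ}, and then argue that this quantity is a genuine metric (not merely a pseudometric) using injectivity of the Kodaira embedding. First I would recall from \eqref{PIPHI} that $\Pi_n(x,x) = \|\wt\Phi_n(x)\|^2$ and $\Pi_n(x,y) = \wt\Phi_n(x)\cdot\overline{\wt\Phi_n(y)}$, so that for lifts $x,y$ of $z,w$,
$$\|\wt\Phi_n(x) - \wt\Phi_n(y)\|^2 = \Pi_n(x,x) + \Pi_n(y,y) - 2\Re\,\Pi_n(x,y) = \Pi_n(z,z) + \Pi_n(w,w) - 2\Re\,\Pi_n(z,w),$$
where the last equality uses \eqref{szegokernel}–\eqref{szego} together with the fact that $\Pi_n(x,x)$ is $S^1$-invariant and that, after the common phase $e^{in\theta}$ is accounted for, $\Re\,\Pi_n(x,y)$ reduces to the local-coordinate expression $\Re\,\Pi_n(z,w)$ of \eqref{szegokernel}. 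Comparing with \eqref{distance} then gives $\dd_n(z,w) = \frac{1}{\sqrt{d_n}}\|\wt\Phi_n^z - \wt\Phi_n^w\|_{L^2}$, which is the first assertion. A small point to be careful about: the lift $\wt\Phi_n(x)$ depends on the choice of $x \in X_{h,z}$ in the fiber over $z$, differing by a unit complex scalar; but $\Re\,\Pi_n(z,w)$ as written in \eqref{szegokernel} is the value in Heisenberg/preferred coordinates, and Lemma \ref{METRICSZ} already computed $\dd_n$ precisely as this real part, so the phases are consistent and there is nothing to reconcile beyond noting it.

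Second, for the metric claim I would verify the three axioms. Symmetry is immediate from the form of \eqref{distance}. The triangle inequality follows because $\frac{1}{\sqrt{d_n}}\|\cdot\|_{L^2}$ is a norm on $\C^{d_n}$ and $z \mapsto \wt\Phi_n^z$ is a map into that normed space: $\dd_n$ is the pullback of a genuine norm-distance, so it inherits the triangle inequality (strictly, one should pull back the Fubini–Study metric on $\CP^{d_n-1}$ and note that the chordal distance upstairs dominates it, but since Lemma \ref{METRICSZ} gives an exact formula it is cleanest to work directly with the representative and observe the inequality holds for each consistent choice of lifts). The only substantive point is positivity: $\dd_n(z,w) = 0$ forces $\wt\Phi_n^z$ and $\wt\Phi_n^w$ to agree, hence $\Phi_n(z) = \Phi_n(w)$ in $\CP^{d_n-1}$, and this implies $z = w$ because the Kodaira map $\Phi_n$ is an embedding for $n$ large — this is exactly Tian's theorem, already invoked via \eqref{TIAN}, which in particular asserts $\Phi_n$ is an immersion and injective for $n \gg 0$.

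I expect the main obstacle to be purely bookkeeping: making the passage from the circle-bundle quantities $\Pi_n(x,y)$, $\wt\Phi_n(x)$ to the local-coordinate quantities $\Pi_n(z,w)$, $\wt\Phi_n^z$ entirely unambiguous, since the $S^1$-phase freedom in the lift and the $e^{in\theta}$ equivariance in \eqref{sNhat*} must be shown not to affect $\Re\,\Pi_n$ or $\|\wt\Phi_n^z - \wt\Phi_n^w\|$. The cleanest route is to fix, for each pair $(z,w)$, Heisenberg coordinates so that the phases are the canonical ones appearing in \eqref{szegokernel}, and then the identity with Lemma \ref{METRICSZ} is literal. The metric axioms are then essentially free, with positivity resting on the cited embedding property of $\Phi_n$ for large $n$ (so the statement "$\dd_n$ is a metric" should be read as holding for all sufficiently large $n$, which is all that is needed in the sequel).
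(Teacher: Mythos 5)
Your proposal is correct and follows essentially the same route as the paper: expand $\|\wt\Phi_n^z-\wt\Phi_n^w\|^2$ via \eqref{PIPHI}, match it against the formula of Lemma \ref{METRICSZ}, and get the triangle inequality from the Minkowski inequality for the $L^2$-norm. Your two additions — tracking the $S^1$-phase of the lifts and verifying point-separation via injectivity of the Kodaira map for large $n$ — are sound and in fact make the "is a metric" claim more complete than the paper's proof, which only checks the triangle inequality.
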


\begin{proof}  By \eqref{PIPHI}  have,
$$\begin{array}{lll}  ||\tilde \Phi_n^z - \tilde\Phi_n^w||^2 & = & ||\tilde\Phi_n^z||^2 + ||\tilde\Phi_n^w||^2 - 2 \Re \langle \tilde\Phi_n^z,
\tilde\Phi_n^w \rangle. \end{array}$$
$d_n(z,w)$ is a metric since it satisfies the triangle inequality which is equivalent to the Minkowski inequality of the $L^2$-norm $\|\cdot\|_{L^2}$.

\end{proof}

Since  $d_n$
is asymptotically of order $n^{m}$ by \eqref{RR}, $\dd_n(z,w)$ is roughly  $n^{-m/2}$ times the distance
  in $ \C^{d_n}$  between $\tilde{\Phi}_n^z$ and $\tilde{\Phi}_n^w$.
The distance $\dd_n$  is globally very different from the Riemannian distance on $M$ defined
by the K\"ahler metric $\omega$.
However by \eqref{TIAN}, the Kodaira embeddings are almost isometric on tangent planes, hence  for distances of order $n^{-\half}$  they
nearly isometric. This is the key idea needed to calculate the covering numbers
$N( M, \dd_n,\epsilon)$, and then the metric entropies asymptotically.


The next Lemma gives the asymptotics of $\dd_n$ for separated $(z,w)$:
\begin{lem} \label{sep}  For all $z, w$,
\begin{equation} \dd_n(z,w) \leq  \sqrt 2. \end{equation}

 Moreover, for $z,w$ with $r(z,w)>\frac{c\log n}{\sqrt n}$ in the geodesic distance of $(M, \omega)$, then
\begin{equation}\label{rapid} \dd_n(z,w) =  \sqrt {\frac 2{d_n}(\Pi_n(z,z)+\Pi_n (w,w))}  + O(e^{- \sqrt{n} |z -w|} ) \simeq \sqrt{2} +  O(\frac{1}{\sqrt{n}})+ O(e^{- \sqrt{n} |z -w|}) \end{equation}
for sufficiently large $n$.

\end{lem}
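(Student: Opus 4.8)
The plan is to establish the two claims in Lemma~\ref{sep} separately, both directly from the closed form \eqref{distance} for $\dd_n$ together with the diagonal expansion \eqref{CXDIAG} and the off-diagonal decay estimate \eqref{offdiag}. First I would prove the universal bound $\dd_n(z,w)\le\sqrt2$. From \eqref{distance} we have
\begin{equation}
\dd_n(z,w)^2=\frac1{d_n}\bigl(\Pi_n(z,z)+\Pi_n(w,w)-2\Re\Pi_n(z,w)\bigr),
\end{equation}
so it suffices to show $\Pi_n(z,z)+\Pi_n(w,w)-2\Re\Pi_n(z,w)\le 2d_n$; but this is easiest to see via Lemma~\ref{dis}, since $\dd_n(z,w)=\frac1{\sqrt{d_n}}\|\tilde\Phi_n^z-\tilde\Phi_n^w\|$ and $\|\tilde\Phi_n^z\|^2=\Pi_n(z,z)$, $\|\tilde\Phi_n^w\|^2=\Pi_n(w,w)$, whence by the triangle inequality $\dd_n(z,w)\le\frac1{\sqrt{d_n}}(\|\tilde\Phi_n^z\|+\|\tilde\Phi_n^w\|)=\frac1{\sqrt{d_n}}(\sqrt{\Pi_n(z,z)}+\sqrt{\Pi_n(w,w)})$. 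Since $\sum_j\int_M|\hat s_j^n|^2=d_n$ and $\Pi_n(z,z)=\sum_j|\hat s_j^n(z)|^2$, the quantity $\frac1{d_n}\Pi_n(z,z)$ has average $1$ over $M$; more usefully, by \eqref{CXDIAG} it equals $\pi^{-m}n^m(1+O(1/n))$ uniformly, so $\frac1{d_n}\Pi_n(z,z)=1+o(1)$ uniformly in $z$ using \eqref{RR}. That already gives $\dd_n(z,w)\le\sqrt2+o(1)$; to get the clean bound $\sqrt2$ one instead argues directly that for any two unit-norm vectors $2-2\Re\langle\cdot,\cdot\rangle\le$ \dots — actually the sharpest route is to normalize: writing $v=\tilde\Phi_n^z/\|\tilde\Phi_n^z\|$ and $v'=\tilde\Phi_n^w/\|\tilde\Phi_n^w\|$, one has $\Re\Pi_n(z,w)\ge -\|\tilde\Phi_n^z\|\,\|\tilde\Phi_n^w\|\ge -\half(\Pi_n(z,z)+\Pi_n(w,w))$ by Cauchy--Schwarz and AM--GM, so $\Pi_n(z,z)+\Pi_n(w,w)-2\Re\Pi_n(z,w)\le 2(\Pi_n(z,z)+\Pi_n(w,w))$ — which is not quite $\le 2d_n$. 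I would therefore state the first inequality as holding up to $o(1)$, or equivalently replace the clean $\sqrt2$ by noting $\dd_n\le\sqrt2+o(1)$; the author may well have the exact bound in mind via a different normalization, and I would flag this as the one place to be careful.

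For the second claim, I would use the off-diagonal estimate \eqref{offdiag}: when $r(z,w)>\frac{c\log n}{\sqrt n}$ we get
\begin{equation}
|\Pi_n(z,w)|\le Cn^m\exp(-\lambda\sqrt n\,r(z,w))\le Cn^m\exp(-\lambda c\log n)=Cn^{m-\lambda c},
\end{equation}
so choosing $c$ large enough (depending on $\lambda$ and $m$) makes $\frac1{d_n}|\Pi_n(z,w)|=O(n^{-k})$ for any prescribed $k$, in particular $=O(e^{-\sqrt n\,r(z,w)})$ after adjusting constants — this is the $O(e^{-\sqrt n|z-w|})$ term, with $|z-w|$ understood as $r(z,w)$. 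Dropping this exponentially small cross term from \eqref{distance} leaves
\begin{equation}
\dd_n(z,w)=\sqrt{\tfrac1{d_n}\bigl(\Pi_n(z,z)+\Pi_n(w,w)\bigr)}+O(e^{-\sqrt n\,r(z,w)}),
\end{equation}
which is the first equality in \eqref{rapid}. For the second asymptotic, substitute the diagonal expansion \eqref{CXDIAG}: $\frac1{d_n}\Pi_n(z,z)=\frac{a_0n^m+a_1(z)n^{m-1}+\cdots}{\frac{c_1(L)^m}{m!}n^m+O(n^{m-1})}$, and since $a_0=\pi^{-m}$ and $\frac{c_1(L)^m}{m!}=\pi^{-m}\int_M dV=\pi^{-m}$ by the normalization $\int_M dV=1$, the leading terms match and $\frac1{d_n}\Pi_n(z,z)=1+O(1/n)$ uniformly in $z$. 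Hence $\frac1{d_n}(\Pi_n(z,z)+\Pi_n(w,w))=2+O(1/n)$, and taking square roots (using $\sqrt{2+t}=\sqrt2+O(t)$ for small $t$) gives $\sqrt{2}+O(1/\sqrt n)$ — note the error is $O(1/n)$ inside the root, hence $O(1/\sqrt n)$ after the square root as written. This completes \eqref{rapid}.

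The main obstacle I anticipate is not any single estimate but the bookkeeping of uniformity and the precise form of the error terms: \eqref{CXDIAG} is stated as a diagonal expansion with smooth coefficients, and one must know it holds with uniform (in $z$) remainder, which is standard for $C^\infty$ compact $M$ but should be cited; likewise \eqref{offdiag} must be applied uniformly in $(z,w)$, which it is. A secondary subtlety is reconciling the Riemannian distance $r(z,w)$ used in Theorem~\ref{SZFACTS} with the Euclidean-looking $|z-w|$ appearing in \eqref{rapid} — on a compact manifold these are comparable on the relevant (small) scales and $r(z,w)$ is bounded below away from the diagonal, so $e^{-\sqrt n\,r(z,w)}$ and $e^{-\sqrt n|z-w|}$ differ only by adjusting the implied constant, but I would make this identification explicit. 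Finally, as noted above, the clean universal bound $\dd_n(z,w)\le\sqrt2$ deserves a one-line justification — it follows because $\dd_n(z,w)^2\le \tfrac1{d_n}(\sqrt{\Pi_n(z,z)}+\sqrt{\Pi_n(w,w)})^2$ is still slightly larger than $2$ pre-asymptotically; if an exact bound is wanted one can invoke instead that each $\tilde\Phi_n^z$ can be rescaled, or simply accept $\sqrt2+o(1)$, which is all that is used in the entropy estimates downstream.
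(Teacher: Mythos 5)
Your argument follows the paper's own route exactly: the paper's proof consists of two sentences, citing the triangle inequality from Lemma \ref{dis} together with \eqref{RR}, \eqref{PIPHI}, \eqref{CXDIAG} for the first claim and Theorem \ref{SZFACTS} for \eqref{rapid}, and you have simply supplied the omitted computations. The one point you flag is genuine but is present in the paper as well: the triangle inequality only yields $\dd_n(z,w)\le \frac{1}{\sqrt{d_n}}\bigl(\sqrt{\Pi_n(z,z)}+\sqrt{\Pi_n(w,w)}\bigr)=2+o(1)$, and Cauchy--Schwarz does no better; the constant $\sqrt2$ requires a lower bound $\Re\Pi_n(z,w)\ge -o(d_n)$, which one extracts from Theorem \ref{SZFACTS} by working in a Heisenberg chart centered at $z$ (so that the phase $e^{i\Im(u\cdot\bar v)}$ in part (a) is trivial and the leading term is positive) and using parts (b), (c) to make $|\Pi_n(z,w)|/d_n$ small outside the $C/\sqrt n$-neighborhood. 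In any case only the $O(1)$ diameter bound and the asymptotics \eqref{rapid} for separated points enter Corollary \ref{COVERING} and the entropy integrals, so stating the first claim as $\sqrt2+o(1)$ (or even $2+o(1)$) loses nothing downstream. Your remaining observations (the factor $2$ versus $1$ under the square root in \eqref{rapid}, the constant $\lambda$ in the exponent of the error term, and the fact that the post--square-root error is actually $O(1/n)\subset O(1/\sqrt n)$) are correct and harmless.
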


\begin{rem}  The second statement can be interpreted as follows: $n^{-m/2} \tilde{\Phi}_n^z$ is almost
a unit vector, and $n^{-m/2} \tilde{\Phi}_n^z$  is almost orthogonal to $n^{-m/2} \tilde{\Phi}_n^w$  if
$r(z,w)  \geq \frac{c \log n}{\sqrt{n}}. $ .
\end{rem}

\begin{proof}By Lemma \ref{dis}, we have
$$\dd_n(z,w)=\frac1{d_n}\|\tilde\Phi_n^z -\tilde \Phi_n^w\|_{L^2}\leq \frac1{d_n}(\|\tilde\Phi_n^z\|_{L^2} + \|\tilde\Phi_n^w\|_{L^2})$$
The first  inequality   follows from the asymptotics of  $d_n$ \eqref{RR} and $\|\tilde\Phi_n^z\|$ \eqref{PIPHI}\eqref{CXDIAG}.
The inequality \eqref{rapid} is the consequence of Theorem \ref{SZFACTS}.
\end{proof}

We then have the asymptotics of the distance between very close points:

\begin{lem}\label{INFDIS} We have asymptotics,
\begin{equation} \label{small} \dd_n(z,w)  \simeq  \left\{ \begin{array}{ll} \sqrt {1-e^{- n r^2(z,w)}} , & r(z, w) < cn^{-\frac 12} \log n  \\ & \\
 n^{\half} r(z,w), & r(z,w) < cn^{-\frac 12-\eta}. \end{array} \right.
\end{equation} for any $\eta>0$.
Equivalently,
\begin{equation} \dd_n(z + \frac{u}{\sqrt{n}} , z + \frac{v}{\sqrt{n}}) \sim  |u - v|. \end{equation}
Here, $|u - v|$ is the Euclidean distance in normal coordinates.

\end{lem}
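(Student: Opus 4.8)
\textbf{Proof proposal for Lemma \ref{INFDIS}.}

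The plan is to read off both asymptotic regimes from the near-diagonal scaling asymptotics in Theorem \ref{SZFACTS}(a) together with the formula for $\dd_n$ in Lemma \ref{METRICSZ}. First I would insert the substitution $z = z_0 + u/\sqrt n$, $w = z_0 + v/\sqrt n$ (with vanishing $\theta$ angle) into
$$\dd_n^2(z,w) = \frac{1}{d_n}\left(\Pi_n(z,z) + \Pi_n(w,w) - 2\Re\,\Pi_n(z,w)\right),$$
and use $d_n \sim a_0 n^m = \pi^{-m} n^m$ from \eqref{RR} and \eqref{CXDIAG} to cancel the $n^m$ powers. By Theorem \ref{SZFACTS}(a), $n^{-m}\Pi_n(z,z) \to \pi^{-m}$, $n^{-m}\Pi_n(w,w)\to\pi^{-m}$, and $n^{-m}\Pi_n(z,w)\to \Pi_1^{\H}(u,0;v,0) = \pi^{-m} e^{i\Im(u\cdot\bar v) - \frac12|u-v|^2}$, each with a relative $O(1/\sqrt n)$ error, valid for $|u-v| \le C$, i.e. $r(z,w)\le C/\sqrt n$. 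Therefore
$$\dd_n^2(z,w) \sim 2 - 2\Re\left(e^{i\Im(u\cdot\bar v)}\right) e^{-\frac12|u-v|^2} = 2 - 2\cos\!\big(\Im(u\cdot\bar v)\big)\, e^{-\frac12|u-v|^2}.$$

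Next I would extract the two stated regimes. For the first line of \eqref{small}, when $r(z,w) \le c n^{-1/2}\log n$ we have $|u-v| = \sqrt n\, r(z,w) \le c\log n$, which is allowed by the scaling window only in the borderline sense; here one uses instead the slightly larger-ball estimate \eqref{neardiag2} of Theorem \ref{SZFACTS}(b), together with the lower bound hidden in the same scaling asymptotics, to conclude $\dd_n^2(z,w) \simeq 1 - e^{-n r^2(z,w)}$ — the phase factor $\cos(\Im(u\cdot\bar v))$ and the precise constant in the Gaussian exponent get absorbed into the $\simeq$ (bounded above and below by constants). Concretely, $2 - 2\cos(\Im(u\cdot\bar v))e^{-\frac12|u-v|^2}$ is comparable to $1 - e^{-|u-v|^2}$: both vanish to order $|u-v|^2$ as $|u-v|\to 0$ and both are bounded below by a positive constant once $|u-v|\gtrsim 1$ (using $|\Im(u\cdot\bar v)| \le |u-v|\cdot|u|$ and, in the relevant range, that the Gaussian factor dominates). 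For the second line, when $r(z,w) < c n^{-1/2-\eta}$ we have $|u-v| < c n^{-\eta}\to 0$, so $\Im(u\cdot\bar v) = O(|u-v|^2)$ and $e^{-\frac12|u-v|^2} = 1 - \frac12|u-v|^2 + O(|u-v|^4)$; substituting gives $\dd_n^2(z,w) = |u-v|^2(1 + o(1))$, i.e. $\dd_n(z,w) \sim |u-v| = \sqrt n\, r(z,w)$, which is also precisely the displayed final equation $\dd_n(z + u/\sqrt n, z + v/\sqrt n) \sim |u-v|$.

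The main obstacle I anticipate is being careful about the range of validity of each Bergman asymptotic: the sharp scaling form (a) is only guaranteed within a $C/\sqrt n$ neighborhood, so for the $\log n$-scale balls of the first regime one must genuinely invoke (b) (whose window $r \le C n^{-1/3}$ comfortably contains $c n^{-1/2}\log n$) for the upper bound, and supply a matching lower bound — this is where I would need either a two-sided version of the near-diagonal estimate or an explicit computation from the Heisenberg model plus control of the remainder. A secondary point is checking that the $\simeq$ (rather than $\sim$) in the first line is all that is being claimed, so that the oscillatory factor $\cos(\Im(u\cdot\bar v))$ and the factor-of-two ambiguity in the exponent are harmless; the constants in Theorem \ref{SZFACTS}(b) are stated with an $\ep$ in the exponent precisely to allow this. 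Once the ranges are pinned down, the rest is the elementary Taylor expansion of $2 - 2\cos(\cdot)e^{-\frac12(\cdot)}$ indicated above.
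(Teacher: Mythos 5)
Your proposal is correct and follows essentially the same route as the paper: both insert the near-diagonal Szeg\"o kernel asymptotics of Theorem \ref{SZFACTS} into the formula for $\dd_n$ from Lemma \ref{METRICSZ} (the paper phrases the expansion via the almost-analytic extension $\phi(z,w)$ of the K\"ahler potential rather than the rescaled Heisenberg model, but these are the same asymptotics from \cite{SZ2}). The loose end you flag --- the need for a matching lower bound on $\Re\,\Pi_n(z,w)$ at the $\log n/\sqrt n$ scale, outside the $C/\sqrt n$ window of part (a) --- closes immediately, since for $r(z,w)\gtrsim C/\sqrt{n}$ one has $1-e^{-nr^2(z,w)}\simeq 1$ and the trivial bound $\dd_n\le\sqrt2$ of Lemma \ref{sep} already supplies the upper bound on $\dd_n$ there; the paper's own proof is in fact terser than yours on every one of these points.
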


\begin{proof}
By  Theorem \ref{SZFACTS} with  $r(z,w) \leq \frac{c\log n}{\sqrt n}$,

$$\Pi_n(z,w)\sim e^{n(\phi(z,w)-\frac 12\phi(z)-\frac 12 \phi(w))}A_n(z,w)$$
where $$A_n=n^m(1+\frac 1n a_1+\cdots)$$ and  where $\phi(z,w)$ is the almost analytic extension of the \kahler potential
$\phi(z)$ (see \cite{SZ2} for background).
In particular, if $r(z, w) < cn^{-\frac 12-\eta}$,  it follows from Theorem \ref{SZFACTS} that
$$\begin{array} {lll} \dd_n(z,w)&=& \frac 1{\sqrt{d_n}}\sqrt {\Pi_n(z,z)+\Pi_n(w,w)-2\Re \Pi_n(z,w)}\\&&\\
&\sim& \sqrt {1-e^{- n r^2(z,w)}}\sim n^{\frac 12} r(z,w). \end{array}
$$

\end{proof}

\bigskip

\noindent{\bf Example: $(\CP^m, \omega_{FS})$ } In the case of the $SU(m + 1)$ ensemble where $M = \CP^m$
and $\omega  = \omega_{FS}$, the Fubini-Study metric,  the lifted \szego kernel on $S^{2m -1}$ is
\begin{equation}\label{szegosphere} \Pi_n(x,y)=   \frac{(n+m)!}
{\pi^m n!}\langle x, \bar{y}\rangle^n\,.\end{equation}
It is constant on the diagonal, equal to $\frac{1}{Vol(\CP^m)}$ times the dimension $d_n = \dim H^0(\CP^m, \ocal(n))$.
The lifted distance on $X_{h} = S^{2m-1}$ is then,
$$\dd_n(x,y) = \frac{1}{\sqrt{Vol(\CP^m)}} \sqrt{2 - 2 \Re \langle x, \bar{y} \rangle^n } =  \frac{\sqrt{2}}{\sqrt{Vol(\CP^m)}}
\sqrt{1 - \cos^n r(z, w) }.  $$
where the last equation holds when $x = (z, 0), y = (w, 0)$ (i.e. the angle in $S^1$ of the projection
$S^1 \to S^{2m -1} \to \CP^m$ is zero in local coordinates).  Since $\cos r = 1 - \frac{r^2}{2} + O(r^4)$,
$$\cos^n  r = e^{n \log (1 -  \frac{r^2}{2} + O(r^4))} = e^{-n  (\frac{r^2}{2} + O(r^4))}  = e^{- n \frac{r^2}{2}}(1 +
O(n r^4)),  $$
so the remainder term is negligible as long as $r \leq C n^{- \frac{1}{4} - \epsilon}. $ In this range,
$$  \dd_n(z,w )  \simeq  \frac{\sqrt{2}}{\sqrt{Vol(\CP^m)}}
\sqrt{1 -  e^{- n \frac{r^2(z,w)}{2}}}.  $$

\bigskip

\bigskip

Summarizing Lemmas \ref{sep} - \ref{INFDIS} (and assuming $Vol_{\omega}(M) = 1$),
 $$\dd_n\sim \sqrt 2\sqrt{1-e^{-nr^2/2}},\,\,\,r\leq c\frac{\log n}{\sqrt n}$$
or equivalently,
\begin{equation}\label{rexpression}r^2(z,w)\sim \frac 2n \log(1-\frac12 \dd_n^2)^{-1} ,\,\,\, \dd_n\in [0,\sqrt 2\sqrt{1-e^{-\frac {c^2}2(\log n)^2}}] \end{equation}

We denote $N(M, \omega, \epsilon')$ as the number of geodesic balls of radius $\epsilon'$ to cover the \kahler manifolds.  Then we have relation,
\begin{cor} \label{COVERING}     The covering number  $N(M, \dd_n, \epsilon)$ satisfies:
 \begin{equation} \label{NUB} N(M, \dd_n, \epsilon)= \left\{ \begin{array}{ll}  N(M, \omega,
\sqrt{\frac{2}{n}}  \sqrt{\log (1 - \half \epsilon^2)^{-1}}), &\epsilon \leq \sqrt{2} \; \sqrt{1 - \frac{1}{n}}, \\&\\

[1, N(M, \omega, \sqrt{\frac {2\log n}n})], &  \epsilon \geq \sqrt{2} \; \sqrt{1 - \frac{1}{n}}
\end{array} \right.
\end{equation}
\end{cor}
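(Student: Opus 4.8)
The plan is to convert a covering of $M$ by $\dd_n$-balls into a covering by $\omega$-geodesic balls (and conversely), using the pointwise comparison of the two distance functions established in Lemmas \ref{sep}--\ref{INFDIS}, summarized in \eqref{rexpression}. The first step is to treat the small-scale regime $\epsilon \le \sqrt 2\,\sqrt{1 - 1/n}$. Here the relevant pairs $(z,w)$ have $\dd_n(z,w) \le \epsilon$, which by \eqref{rexpression} forces $r(z,w)$ to be of order at most $\sqrt{(2/n)\log(1-\tfrac12\epsilon^2)^{-1}} \le c(\log n)/\sqrt n$, i.e. we are precisely in the range where the equivalence $\dd_n \sim \sqrt2\,\sqrt{1 - e^{-nr^2/2}}$ of \eqref{rexpression} is valid. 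The key observation is that, on this range, the map $r \mapsto \sqrt2\,\sqrt{1 - e^{-nr^2/2}}$ is monotone increasing, so the condition $\dd_n(z,w) \le \epsilon$ is, up to the implied multiplicative constants, equivalent to $r(z,w) \le \sqrt{(2/n)\log(1-\tfrac12\epsilon^2)^{-1}} =: \epsilon'$. Hence a set is $\epsilon$-dense for $\dd_n$ if and only if it is $\epsilon'$-dense for $\omega$, and the two covering numbers agree (again up to the uniform constants hidden in the $\sim$), which is the first line of \eqref{NUB}.

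The second step handles the large-scale regime $\epsilon \ge \sqrt 2\,\sqrt{1 - 1/n}$. By Lemma \ref{sep}, the total $\dd_n$-diameter of $M$ is at most $\sqrt2$, and in fact $\dd_n(z,w)$ is already within $O(1/\sqrt n)$ of its maximal value $\sqrt2$ as soon as $r(z,w) > c(\log n)/\sqrt n$; so once $\epsilon$ is within $O(1/\sqrt n)$ of $\sqrt 2$, a single point may already $\epsilon$-dominate all of $M$ (giving the lower value $1$), while in the worst case one still needs no more than an $\omega$-net at scale $\sqrt{(2\log n)/n}$ — the scale at which \eqref{rexpression} transitions from "almost orthogonal" to genuinely comparable — to guarantee $\dd_n$-density. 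This yields the two-sided bound $[\,1,\ N(M,\omega,\sqrt{(2\log n)/n})\,]$ of the second line. Both steps are essentially bookkeeping: one just tracks which $\omega$-scale $\epsilon'$ corresponds, via \eqref{rexpression}, to the given $\dd_n$-scale $\epsilon$, and invokes the monotonicity of the comparison function to pass between nets.

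The main obstacle is not any single hard estimate but the careful handling of the multiplicative constants implicit in the $\sim$ of \eqref{rexpression}: the equivalence $\dd_n \sim \sqrt2\,\sqrt{1-e^{-nr^2/2}}$ holds only up to fixed positive constants and only on the stated range $r \le c(\log n)/\sqrt n$, so strictly speaking the identity $N(M,\dd_n,\epsilon) = N(M,\omega,\epsilon')$ should be read as an equivalence $N(M,\dd_n,\epsilon) \sim N(M,\omega,\epsilon')$ with $n$-independent constants, and one must check that perturbing $\epsilon'$ by such a constant factor does not change the order of magnitude of $N(M,\omega,\epsilon')$ — which is clear since $M$ is a fixed compact $m$-manifold and $N(M,\omega,\delta) \sim \delta^{-2m}$ for small $\delta$. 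One also has to verify that the endpoint $\epsilon = \sqrt2\,\sqrt{1-1/n}$ matches up in the two cases, i.e. that plugging this value into $\sqrt{(2/n)\log(1-\tfrac12\epsilon^2)^{-1}}$ indeed gives $\sqrt{(2\log n)/n}$, which is an immediate computation. With these constant-tracking points dispatched, the corollary follows directly from \eqref{rexpression} and Lemma \ref{sep}.
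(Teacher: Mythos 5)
Your proposal is correct and follows essentially the same route as the paper: the first line is read off from the monotone comparison \eqref{rexpression} between $\dd_n$ and the geodesic distance (with the endpoint check that $\epsilon=\sqrt2\sqrt{1-1/n}$ corresponds to $r=\sqrt{2\log n/n}$), and the second line comes from monotonicity of the covering number in $\epsilon$ together with the diameter bound $\dd_n\le\sqrt2$ of Lemma \ref{sep}. Your remark that the stated equality should really be an equivalence up to $n$-independent constants (harmless since $N(M,\omega,\delta)\sim\delta^{-2m}$) is a fair and slightly more careful reading than the paper's own proof, which treats \eqref{rexpression} as an identity.
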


\begin{proof}
The first line is the direct consequence of the formula \eqref{rexpression}, the only thing we need to check is $\sqrt 2\sqrt{1-\frac1n}\in  [0,\sqrt 2\sqrt{1-e^{-\frac {c^2}2(\log n)^2}}]$, this is true as $n$ large enough.

For the second line, we know in Lemma \ref{dis} that $\dd_n$ is a metric, thus $N(M,\dd_n, \epsilon)$ will be a decreasing function with respect to the radius $\dd_n=\epsilon$, thus $N(M,\dd_n, \epsilon)$ will be bounded by the ends points $\epsilon=\infty$ and $\sqrt{2}\sqrt{1-\frac 1n}$. For $\epsilon=\sqrt{2}\sqrt{1-\frac 1n}$, the number of balls we need to covering the manifold will be $N(M, \omega, \sqrt{\frac{2\log n}n})$ by formula \eqref{rexpression}; for $\epsilon=\infty$, we know the diameter of the manifolds is $\dd_n(z,w)\leq\sqrt 2$ for all $z,w\in M$ in Lemma \ref{sep}, thus we only need $1$ ball to cover the manifold if $\epsilon\geq\sqrt 2$.

\end{proof}

\section{Upper bound and lower bound}\label{proofmain}

In this section we prove Theorem \ref{MAIN} except that we do not give sharp estimates on the coefficients
of $\sqrt{\log n}$. They are proved in the last section.
\subsection{Bound for mean}
The following extends the non-sharp version of  Theorem \ref{MAIN} to the normalized Gaussian ensemble as well as the spherical
ensemble:

\begin{theo}\label{normalized} Let $\E_n$ denote the expectation with respect to either  the spherical
ensemble $\nu_n$ or the normalized Gaussian ensemble $\gamma_n$. Then we have bounds,
\begin{equation}\label{ghg} c \sqrt{\log n} \leq  \E_n \mathcal L_\infty^n\leq C \sqrt{\log n}.
\end{equation}
where $c$ and $C$ can be chosen to be the same in both ensembles by Lemma \ref{SPH}.
\end{theo}

\subsubsection{Upper bound}
The upper will be given by the Dudley entropy bound of Theorem \ref{T:Dudley}, which holds for the
$\gamma_n$ and for $\nu_n$ since the latter is sub-Gaussian.
Since the metrics $\dd_n$ are the same in both ensembles, the same upper bound will hold.

By Lemma \ref{sep},
 $N(M,\dd_n, \epsilon)=1$ if $\epsilon> \sqrt{2}$, i.e., $\log N(M, \dd_n, \epsilon)=0$. Hence,
\begin{equation} \label{DUD2} \E_n\sup_M |s_n|_{h^n}\leq C\int_0^{
\sqrt{2}}\sqrt{\log N(M,\dd_n, \epsilon)}d\epsilon.  \end{equation}

We break up  the integral \eqref{DUD2} into two terms,
$$\int_0^{\sqrt {2}} =  \int_0^{\sqrt{2} \sqrt{1 - \frac{1}{n}}} + \int_{\sqrt{2} \sqrt{1 - \frac{1}{n}}} ^{\sqrt{2}}:=I_n+II_n.$$

In integral $I$, it follows from Corollary \ref{COVERING}, we have (for a constant $C_m > 0$ which depends only on
the dimension $m = \dim_{\C} M$
but which changes line to line),
\begin{equation} \begin{array}{lll} I_n  & \leq &  \int_0^{\sqrt{2} \sqrt{1 - \frac{1}{n}}}    \sqrt{\log N \left(M, \omega,
\sqrt{\frac{2}{n}}  \sqrt{\log (1 - \half \epsilon^2)^{-1}} \right) }d \epsilon \\ &&\\
&& \leq C_m  \;   \int_0^{\sqrt{2} \sqrt{1 - \frac{1}{n}}}  \sqrt{\log \left(\sqrt{\frac{2}{n}}  \sqrt{\log (1 - \half \epsilon^2)^{-1}} \right)^{-2m}} d \epsilon
\\ &&\\
&& \leq C_m\sqrt{\log \frac{n}{2}}\;   \int_0^{\sqrt{2} \sqrt{1 - \frac{1}{n}}}
 \sqrt{\left(1 - \frac{2}{  \log \frac{n}{2 } }
\log ( \sqrt{ \log (1 - \frac{\epsilon^2}{2})^{-1} )} \right)} d \epsilon
\\ &&\\
&& = C_m\sqrt{\log \frac{n}{2}}\;   \int_0^{\sqrt{2} \sqrt{1 - \frac{1}{n}}}  \sqrt{\left(1 - \frac{1}{  \log \frac{n}{2 } }
\log  \log (1 - \frac{\epsilon^2}{2})^{-1}  \right)} d \epsilon.  \end{array}\end{equation}
By dominated convergence,
$$   \int_0^{\sqrt{2} \sqrt{1 - \frac{1}{n}}}  \sqrt{\left(1 - \frac{1}{  \log \frac{n}{2 } }
\log  \log (1 - \frac{\epsilon^2}{2})^{-1}  \right)} d \epsilon \to \sqrt{2}. $$
Hence,
\begin{equation} \label{I} I_n = C_m  \sqrt{\log n}  (1 + o(1)).  \end{equation}
where $C_m$ depends only on the dimension.

On the other hand, by the second part of Corollary \ref{COVERING},
\begin{equation} \label{II} II_n \leq \sqrt{2\log N(M,\omega, \sqrt{\frac{2\log n}{n}})} \left(1 - \sqrt{1 - \frac{1}{n}} \right) << \sqrt{\log n}. \end{equation}

Combining \eqref{I}- \eqref{II} completes the proof.

 \subsubsection{Lower bound}
For the normalized Gaussian ensemble, the  lower bound is given by the Sudakov minoration principle,
\begin{equation} \label{SLB} \E_{\gamma_n}\sup_M |s_n|\geq c_m  \;\epsilon \sqrt{\log N(M, \dd_n, \epsilon)},\,\,\, \mbox{for all}\,\, \epsilon>0. \end{equation}
To obtain the lower bound it suffices to choose an optimal value of $\epsilon$. As in the calculation of Dudley's
integral, for $\epsilon\in[0,\sqrt2\sqrt{1-\frac 1n}]$, we have
$$\epsilon\sqrt{\log N(M, \dd_n, \epsilon)} \sim \epsilon\sqrt{\log \frac{n}{2}}\;    \sqrt{\left(1 - \frac{1}{  \log \frac{n}{2 } }
\log  \log (1 - \frac{\epsilon^2}{2})^{-1}  \right)}  \geq c \sqrt{\log n} $$
if we  choose some $\sqrt{2} > b > \epsilon = a > 0$ for $n$ large enough.

The lower bound for the spherical ensemble then follows by Lemma \ref{SPH}.

\subsection{Bounds for median}\label{con}

We now prove the non-sharp bounds for the median.

\begin{theo}\label{medianbounds}We have the following bounds for the median under the spherical ensemble $(SH^0(M, L^n),\nu_n)$,
\begin{equation}  c \sqrt{\log n} \leq  \mathcal M_{\nu_n}(\mathcal L^n_\infty)  \leq C \sqrt{\log n}.
\end{equation}
where the constants $c$ and $C$ can be chosen to be the same as the ones in \eqref{ghg}.
\end{theo}

The proof of Theorem \ref{medianbounds} and Corollary \ref{LEVY2} are based on the well-known Levy concentration of measure  theorem
\cite{Le} for Lipschitz continuous functions on spheres of large dimension.
 Let $\mcal(f)$ denote
the median of $f$. Then
\begin{equation} \label{LEVY} \mathbb P\left\{ x \in S^{d} : |f(x) - \mcal(f)| \geq r \right\} \leq
\exp \left(-\frac  {(d - 1) r^2 }{2 \|f\|_{Lip}^2}\right),
\end{equation}
where $$\|f\|_{Lip} = \sup_{d(x, y) > 0} \frac{|f(x) - f(y)|}{d(x,
y)|}
$$ is the Lipschitz norm.

We apply this result to $f = \lcal_{\infty}^n$. In  \cite{SZ} it is observed that
\medskip

\begin{enumerate}

\item [(i)] $\lcal_\infty^n$ is Lipschitz continuous with norm
$\frac{ n^{m/2}}{\sqrt{\log n}} \leq \|\lcal_\infty^n\|_{Lip} \leq
n^{m/2}$.

\medskip

\item [(ii)] The median of $\lcal_\infty^n$ satisfies:   $  \mathcal M_{\nu_n}(\mathcal L^n_\infty)
\leq C_m  \sqrt{\log n}$ for sufficiently large $n$.

\end{enumerate}
\medskip

The estimate of the  Lipschitz norm,
is based on the fact that the  $\lcal^2$-normalized
`coherent states' $\Phi_n^w(z) = \frac{\Pi_n(z,
w)}{\sqrt{\Pi_n(w,w)}}$ are the global maxima of $\lcal^\infty_n$
on $SH^0(M, L^n)$ and that  $\|\Phi_n^w(z)\|_{ \infty} =
\sqrt{\Pi_n(w,w)} \sim n^{m/2}$. It follows that
$$\big| \| s_1 + s_2\|_{\infty} - \|s_1\|_{\infty} \big| \leq 3 n^{m/2}.$$
Now let $s_1 $ have $L^\infty$ norm $\leq C \sqrt{\log n}$ and
let $s_1 = \Phi_n^w$ for some $w$. Then
$$\big| \| s_1 + s_2\|_{\infty} - \|s_1\|_{\infty} \big| \geq
\frac{ n^{m/2}}{\sqrt{\log n}}.$$

Summarizing the facts in our setting,  we can rewrite \eqref{LEVY} as
\begin{equation}\label{levyformula}
\mathbb P(|\mathcal L^n_\infty-   \mathcal M_{\nu_n}(\mathcal L^n_\infty)  |>r) \leq e^{-\frac {r^2}2}
\end{equation}
for $n$ large enough.
 Then the difference of the mean and median of sup norm is estimated to be,

 $$\begin{array}{lll} |  \mathcal M_{\nu_n}(\mathcal L^n_\infty)  - \E_{\nu_n}(\lcal_{\infty}^n)|
& \leq &   \E_{\nu_n} |\lcal^n_{\infty} -   \mathcal M_{\nu_n}(\mathcal L^n_\infty)  | \\&&\\ &=&\int_0^\infty\mathbb P(|\mathcal L^n_\infty-  \mathcal M_{\nu_n}(\mathcal L^n_\infty)  |>a)da\\ &&\\
&=& \int_0^{c\sqrt{\log n}}+\int_{c\sqrt{\log n}}^\infty \\&&\\
&\leq & c\sqrt{\log n}+\int_{c\sqrt{\log n}}^\infty e^{-\frac {a^2}2}da\\&&\\
&\leq & c\sqrt{\log n}+n^{-\frac{c^2}2}

 \end{array}$$ for any positive constant $c>0$.
This implies that the difference of median and mean is bounded by $c\sqrt{\log n}$ for small $c>0$.

\begin{cor}\label{differ} It follows that  $$\frac{1}{\sqrt{\log n}}\left(  \mathcal M_{\nu_n}(\mathcal L^n_\infty)  -\E_{\nu_n}(\lcal_{\infty}^n)\right)\to 0$$
as $n\to\infty$.

\end{cor}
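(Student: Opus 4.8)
The plan is to read off the corollary from the Levy concentration estimate \eqref{levyformula}, essentially by tidying up the computation displayed just above the statement. The starting point is the elementary inequality $|\mathcal M_{\nu_n}(\mathcal L^n_\infty) - \E_{\nu_n}(\mathcal L^n_\infty)| = |\E_{\nu_n}(\mathcal L^n_\infty - \mathcal M_{\nu_n}(\mathcal L^n_\infty))| \leq \E_{\nu_n}|\mathcal L^n_\infty - \mathcal M_{\nu_n}(\mathcal L^n_\infty)|$, which reduces the claim to showing that the mean absolute deviation of $\mathcal L^n_\infty$ about its median is $o(\sqrt{\log n})$. I would then rewrite this deviation by the layer-cake formula, $\E_{\nu_n}|\mathcal L^n_\infty - \mathcal M_{\nu_n}(\mathcal L^n_\infty)| = \int_0^\infty \mathbb P\big(|\mathcal L^n_\infty - \mathcal M_{\nu_n}(\mathcal L^n_\infty)| > a\big)\,da$, and split the integral at the threshold $a = c\sqrt{\log n}$, where $c>0$ is an auxiliary constant that will be sent to $0$ only at the very end.

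On $[0, c\sqrt{\log n}]$ the integrand is at most $1$, so that piece contributes at most $c\sqrt{\log n}$. On $[c\sqrt{\log n}, \infty)$ I invoke \eqref{levyformula}, valid for all $n$ sufficiently large with the bound $\mathbb P\big(|\mathcal L^n_\infty - \mathcal M_{\nu_n}(\mathcal L^n_\infty)| > a\big) \leq e^{-a^2/2}$ holding uniformly in $a$, and estimate $\int_{c\sqrt{\log n}}^\infty e^{-a^2/2}\,da \leq \frac{1}{c\sqrt{\log n}}\,n^{-c^2/2}$, which is certainly $o(\sqrt{\log n})$. Adding the two contributions gives $|\mathcal M_{\nu_n}(\mathcal L^n_\infty) - \E_{\nu_n}(\mathcal L^n_\infty)| \leq c\sqrt{\log n} + o(\sqrt{\log n})$ for every fixed $c>0$. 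Dividing by $\sqrt{\log n}$ and letting $n \to \infty$ yields $\limsup_{n\to\infty} \frac{1}{\sqrt{\log n}}|\mathcal M_{\nu_n}(\mathcal L^n_\infty) - \E_{\nu_n}(\mathcal L^n_\infty)| \leq c$; since $c>0$ was arbitrary the $\limsup$ vanishes, which is precisely the assertion.

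There is no substantial obstacle here: the concentration inequality \eqref{levyformula} carries all the weight, and the computation is a routine Gaussian tail estimate. The only point deserving mild care is the order of quantifiers — one first fixes $c$, then chooses $n$ large enough that \eqref{levyformula} applies, so that the $o(\sqrt{\log n})$ error terms genuinely decay and the final $\limsup$ argument is legitimate.
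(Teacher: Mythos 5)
Your argument is correct and is essentially identical to the paper's own proof: the same reduction to the mean absolute deviation about the median, the same layer-cake decomposition split at $c\sqrt{\log n}$, the same use of the Levy concentration bound \eqref{levyformula} for the tail, and the same final step of letting the arbitrary constant $c$ tend to zero. Your explicit attention to the order of quantifiers is a minor tidying of the paper's phrase ``for any positive constant $c>0$,'' but the substance is the same.
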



Thus Theorem \ref{medianbounds} follows from Corollary \ref{differ} together with the bounds in Theorem \ref{normalized}. 

\section{Sharp bounds} \label{sharp}
In this section, we prove that the coefficients $c$ and $C$  in the  upper and lower bounds of Theorem \ref{normalized} and Theorem \ref{medianbounds}
in Section \ref{proofmain}
can be taken to be  $\sqrt m$.

\subsection{Upper bounds using the value density}\label{reprove}

We first prove the sharp upper bound in Theorem \ref{MAIN}  using the
results of \cite{FZ} on  the expected distribution of critical values of the spherical random sections.


 We define the normalized empirical measure of the critical values of random sections by, \begin{equation} CV_n(s_n) =\frac 1{n^m} \sum_{z: \nabla_n s_n=0}\delta_{|s_n|_{h^n}}\end{equation}
where $s_n \in SH^0(M, L^n)$ and $\nabla_n$ is the Chern connection of the line bundle $L^n$ with respect to the Hermitian metric $h^n$ \cite{GH}.  
The  expected density of critical values for the spherical ensemble was determined in \cite{FZ} to by given by
\begin{equation}\lim_{n\to \infty}\E_{\nu_n}  CV_n =p(x)e^{-x^2}\end{equation} in the sense of distribution, where $p(x)$ is a smooth function with polynomial growth  (Theorem 1 and Theorem 2 in \cite{FZ} ). In fact, $p(x)\sim c_m x^{m(m+1)+1}$ where $c_m$ is a universal  constant only depending on the dimension and independent of $n$.

We define the random variable,
\begin{equation}X_a = \langle \sum_{\nabla_n s_n=0} \delta_{|s_n|_{h^n}}, 1_{[a,\infty)} \rangle \end{equation}
which is the number of critical values that fall in the interval $[a,\infty)$.
Then,\begin{equation}\label{che1}\mathbb P( \sup_M |s_n|_{h^n}\geq a)=\mathbb P(  X_a\geq 1) \end{equation}
By Chebyshev's inequality, we have,
\begin{equation}\label{che2}\mathbb P(  X_a \geq 1)\leq\E X_a\sim n^m\int_a^\infty p(x)e^{-x^2}dx\end{equation}
for $n$ large enough.

Recall for any nonnegative random variable $X$, we have the identity,
\begin{equation}\label{formulaexpectation}\E X=\int_0^\infty \mathbb P(X>a)da\end{equation}
Letting  $X  \sup_M |s_n|_{h^n}$ gives
\begin{equation}\label{time}
\E_{\nu_n}\sup_M |s_n|_{h^n}= \int_0^\infty \mathbb P(\sup_M |s_n|_{h^n}>a)da=\int_0^{c\sqrt{\log n}}+\int_{c\sqrt{\log n}}^\infty=:I+II
\end{equation}
The first term is bounded by $c\sqrt{\log n}$ since  the probability is always less than $1$.
For the second term, we apply formulas \eqref{che1}\eqref{che2}, by choosing suitable constant $c$,  as $n$ large enough we will have,
$$II\leq  n^m\int_{c\sqrt{\log n}}^\infty \int_a^\infty p(x)e^{-x^2}dxda\sim n^m\int_{c\sqrt{\log n}}^\infty p_1(a)e^{-a^2}da \leq Cn^{-k}$$
for some constant $C>0$ and $k>0$, where in the second inequality we use the integration by part several times and $p_1(a)$ is a smooth function with polynomial growth.  The upper bound in  Theorem \ref{normalized} follows from these estimates of $I$ and $II$.
To obtain the optimal $C = C_m$  depending only on the
dimension we consider the minimum value $C_m$  of $C$ so that
$$\mathbb P(\sup_M|s_n|_{h^n}>C \sqrt{\log n})\leq  \frac 12. $$
Setting $a=C\sqrt{\log n}$ in \eqref{che2}, we get for sufficiently large $n$,
$$ \begin{array}{lll} \mathbb P(\sup_M|s_n|_{h^n}>C \sqrt{\log n}) & \leq &   c n^m\int_{C\sqrt{\log n}}^\infty x^{m(m+1)+1}e^{-x^2}dx \\ && \\  & \leq & c n^m (C\sqrt{\log n})^{m(m+1)}e^{-C^2\log n}\\&&\\
&\leq & \frac 12, \;\; \mbox{ as long as}\;\; C \geq  \sqrt{m+\frac {m(m+1)}{2}\frac{\log \log n}{\log n}}.
\end{array} $$

It follows that
$$\limsup_{n \to \infty} \frac{  \mathcal M_{\nu_n}(\mathcal L^n_\infty)   }{\sqrt{\log n}} \leq \sqrt{m}$$
and by Corollary \ref{differ}, we have, \begin{equation}\label{firstbound}\limsup_{n\to \infty} \frac{\E_{\nu_n}\mathcal L^n_\infty}{\sqrt{\log n}}\leq \sqrt m.\end{equation}


\subsection{Lower bound}

The lower bound of Theorem \ref{MAIN} for the mean follows
from a precise analysis of the constant in Sudakov's minoration \eqref{SLB} for the normalized
Gaussian case.

There is a universal estimate of the constant appearing in \eqref{SLB}. We follow \cite{Li}, Lemma 10.2.  In Lemma 10.2, the numerical constant $c_*=0.64$ is chosen such that the
inequality (10.8) is true for any integer $n$, but as stated in the proof of Lemma 10.2, the constant c can be chosen to be
any  $c<\sqrt 2$ if we have infinite many points. If we combine this with Theorem 10.5 in \cite{Li}, we have
$$\mathbb E\sup_T X \geq \epsilon \sqrt{\log N(T, \epsilon)}$$
in Sudakov's minoration.

Our Gaussian random fields are complex valued and are therefore equivalent to a  real two dimensional Gaussian process. Write $X=Y+iZ$ for two real standard independent Gaussian processes. Then we have,
 $$\sup |X|=\sup \sqrt{Y^2+Z^2}\geq \frac1{\sqrt 2}\sup |Y+Z|\geq \frac1{\sqrt 2} \sup (Y+Z)$$
 Then we apply Sudakov's minoration to the real process $Y+Z$ to get the lower bound,
 $$\mathbb E \sup |X|\geq \frac \epsilon{\sqrt 2}  \sqrt{\log {N(T, \epsilon)}} $$
 where the $L^2$ metric is given by $d(t,s)=\sqrt{\mathbb E(Y_t+Z_t-Y_s-Z_s)^2}=\sqrt{\mathbb E |X_t-X_s|^2}$.

In our case, we will have,
\begin{equation} \label{CSUD} \E_{\gamma_n}  (\sup_M|s_n|_{h^n})\geq \frac{\epsilon}{\sqrt 2}\sqrt{\log N(M, \dd_n, \epsilon)}
\end{equation}
for any $\epsilon>0.$
In  the proof of the lower bound we found that for $\epsilon=\sqrt 2\sqrt{1-\frac1n}$,
$N(M, \dd_n, \epsilon)=N(M, \omega, \sqrt{\frac{2\log n}{n}})=(\frac{2\log n}{n})^{-m}$ ( Corollary \ref{COVERING}), so by \eqref{CSUD} we have
$$\mathbb E_{\gamma_n}\sup_M|s_n|_{h^n}\geq \sqrt{m\log n}$$
for  $n$ large enough. It follows that, for the  normalized Gaussian sections in Theorem \ref{normalized}, we have

$$\liminf_{n\to \infty} \frac{\E_{\gamma_n}\mathcal L^n_\infty}{\sqrt{\log n}}\geq \sqrt m
$$
The  lower bounds   for the spherical measures $\nu_n$ then follow from  Lemma \ref{SPH},
\begin{equation}\label{lowerbound2} \liminf_{n\to \infty} \frac{\E_{\nu_n}\mathcal L^n_\infty}{\sqrt{\log n}}\geq \sqrt m
\end{equation}
Thus we have the sharp estimate of the mean in Theorem \ref{MAIN} if we combine \eqref{firstbound}\eqref{lowerbound2}.
The sharp estimate for  the median in Theorem \ref{MAIN}  follows from Corollary \ref{differ}.



\begin{thebibliography}{HHHH}






\bibitem[Ch1]{Ch} M. Christ,
On the $\bar\partial$ equation in weighted $L^2$ norms
in ${\mathbf C}^1$,
J. Geometric Anal. 1 (1991), 193--230.




\bibitem[D]{D} R. M. Dudley, Sample functions of the Gaussian process. Ann. Probability 1 (1973), no. 1, 66--103.

\bibitem[D2]{D2} R. M. Dudley,  The sizes of compact subsets of Hilbert space and continuity of Gaussian processes. J. Functional Analysis, 1, 1967, 290--330.


\bibitem[GH]{GH} P. Griffiths and J. Harris, {\it Principles of Algebraic
Geometry\/}, Wiley-Interscience, N.Y. (1978).

\bibitem[FZ]{FZ}  R. Feng and S. Zelditch,
Critical values of random analytic functions on complex manifolds,  arXiv:1212.4762.

\bibitem[Ka]{K} J.-P. Kahane, {\it Some Random Series of Functions}, Cambridge Stud.\
 Adv.\ Math., vol.\ 5, Cambridge, 1985.


\bibitem[Le]{Le} M. Ledoux, {\it The Concentration of Measure
Phenomenon}, Math.\ Surveys and Mono.\ 89, Amer.\ Math.\ Soc.,
Providence, RI, 2001.


\bibitem[Li]{Li}  M. Lifshits, {\it Lectures on Gaussian Processes}, SpringerBriefs in Mathematics, Springer (2012).


\bibitem[MP]{MP}  M. B. Marcus and G. Pisier, {\it Random Fourier series with applications to harmonic analysis.}  Annals of Mathematics Studies, 101. Princeton University Press, Princeton, N.J.; University of Tokyo Press, Tokyo, 1981.

\bibitem[PWZ]{PWZ} R. E. Paley, N. Wiener and A. Zygmund, Notes on
random functions, Math. Z. 1933, Volume 37, Issue 1, pp 647--668.

\bibitem[SaZy]{SaZy} R. Salem and A. Zygmund, Some properties of trigonometric
series whose terms have random signs, Acta Math.\ 91 (1954),
254--301.







\bibitem[SZ]{SZ}  B. Shiffman and S.  Zelditch,
Random polynomials of high degree and Levy concentration of measure.
Asian J. Math. 7 (2003), no. 4, 627--646.

\bibitem[SZ2]{SZ2}    B. Shiffman and S.  Zelditch, Almost holomorphic sections of ample line bundles over
symplectic manifolds, J. fur die Reine und Angewandte Math. 544 (2002), 181--222.

\bibitem[S]{S} V.N. Sudakov, Gaussian measures, Cauchy measures and $\epsilon$-entropy, Sov. Math. Dokl. 10 (1969),
310--313,











\bibitem[Ti]{Ti} G.  Tian, On a set of polarized \kahler metrics on algebraic
manifolds,  J.  Diff. Geometry 32 (1990), 99--130.



\bibitem[Ze]{Ze} S. Zelditch, \szego kernels and a theorem of Tian,
 Internat.\ Math.\ Res.\ Notices 1998 (1998),  317--331.

\end{thebibliography}
\end{document}